\documentclass[11pt]{amsart}
\usepackage{amsmath,amsfonts,amscd,amssymb,amsthm,epsfig,euscript}
\usepackage{mathrsfs}
\usepackage{csquotes}
\usepackage{amsxtra}
\usepackage{verbatim,epsf}
\usepackage{framed}
\newtheorem{theorem}{Theorem}

\newtheorem{lemma}{Lemma}
\newtheorem{corollary}{Corollary}
\theoremstyle{definition}

\theoremstyle{remark}
\newtheorem{remark}{Remark}
\numberwithin{equation}{section}
\newcommand{\field}[1]{\ensuremath{\mathbb{#1}}}
\newcommand{\CC}{\field{C}}

\newcommand{\PP}{\field{P}}

%






%

%

\newcommand{\curly}[1]{\mathscr{#1}}

\newcommand{\cC}{\curly{C}}
\newcommand{\cD}{\curly{D}}

\newcommand{\cF}{\curly{F}}

\newcommand{\cH}{\curly{H}}

\newcommand{\cL}{\curly{L}}
\newcommand{\cM}{\curly{M}}
\newcommand{\cN}{\curly{N}}

\newcommand{\cP}{\curly{P}}
\newcommand{\cQ}{\curly{Q}}

\newcommand{\cU}{\curly{U}}
\newcommand{\cV}{\curly{V}}

\DeclareMathOperator{\tr}{tr} 
 
\DeclareMathOperator{\Aut}{Aut}
 
\DeclareMathOperator{\GL}{GL} \DeclareMathOperator{\End}{End}
\DeclareMathOperator{\Res}{Res} \DeclareMathOperator{\Ad}{Ad}
 \DeclareMathOperator{\ad}{ad}

\usepackage{hyperref}
\hypersetup{colorlinks,citecolor=blue,plainpages=false,hypertexnames=false}
\begin{document}

\title[Optimum parabolic weight chamber examples]{Optimum weight chamber examples of moduli spaces of stable parabolic bundles in genus 0}


\author[Meneses]{Claudio Meneses}
\address{\noindent Centro de Investigaci\'on en Matem\'aticas A.C., Jalisco S/N,
 Valenciana, \indent C.P. 36023, Guanajuato, Guanajuato, M\'exico}
\curraddr{Mathematisches Seminar, Christian-Albrechts Universit\"at zu Kiel, \indent Ludewig-Meyn-Str. 4, 
24118 Kiel, Germany}
\email{meneses@math.uni-kiel.de} 

\thanks{}

\thanks{}

\subjclass[2010]{Primary 14H60, 14J10, 14D20, 14M15}

\date{}

\dedicatory{}
\begin{abstract}
We present an explicit construction of the moduli spaces of rank 2 stable parabolic bundles of parabolic degree 0 over the Riemann sphere, corresponding to ``optimum" open weight chambers of parabolic weights in the weight polytope. The complexity of the different moduli space' weight chambers is understood in terms of the complexity of the actions of the corresponding groups of bundle automorphisms on stable parabolic structures. For the given choices of parabolic weights,  $\mathscr{N}$ consists entirely of isomorphism classes of strictly stable parabolic bundles whose underlying Birkhoff-Grothendieck splitting coefficients are constant and minimal, is constructed as a quotient of a set of stable parabolic structures by a group of bundle automorphisms, and is a smooth, compact complex manifold biholomorphic to $\left(\mathbb{C}\mathbb{P}^{1}\right)^{n-3}$ for even degree, and $\mathbb{C}\mathbb{P}^{n-3}$  for odd degree. As an application of the construction of such explicit models, we provide an explicit characterization of the nilpotent cone locus on $T^{*}\mathscr{N}$ for Hitchin's integrable system.\\

\noindent{\it Keywords}: Parabolic bundle; parabolic weight chamber; Nilpotent cone.
\end{abstract}

\maketitle



\section{Introduction}

A rank 2 \emph{(semi)stable parabolic bundle $E_{*}$ of parabolic degree 0} \cite{MS80} on a compact Riemann surface $\Sigma$, is a holomorphic rank 2 vector bundle $E\to \Sigma$, together with a collection of complete flags  $F_{i} = \{E_{i}\supset L_{i} \supset \{0\}\}$ over the fibers $\{E|_{z_{i}}\}$ of a finite set $S = \{z_{1},\dots z_{n}\}\subset \Sigma$, weighted by real numbers $0 \leq \alpha_{i1} < \alpha_{i2} < 1$ such that
\[
\deg\left( E \right) + \sum_{i=1}^{n}\left(\alpha_{i1} + \alpha_{i2}\right) = 0,
\]
and such that for any line subbundle $L \hookrightarrow E$, 
\[
\deg\left(L\right) + \sum_{i=1}^{n}\alpha'_{i} < 0 \qquad \text{(resp. $\leq$)},
\]
where 
\[
\alpha'_{i} = \left\{
\begin{array}{cr}
\alpha_{i2}\quad & \text{if}\quad L |_{z_{i}} = L_{i},\\\\
\alpha_{i1}\quad & \text{otherwise.}
\end{array}\right.
\]
In order to construct a moduli space $\cN^{ss}$ of semistable parabolic bundles of parabolic degree 0 on $\Sigma$, not only the topology of $E$ must be fixed (i.e., its degree and rank), but also a choice of admissible parabolic weights must be made, resulting in the existence of multiple nonequivalent spaces. 

When $\Sigma = \CC\PP^{1}$, a couple of peculiarities are manifested in the moduli problem: 
(i)  \emph{Not all collections of parabolic weights determine a nonempty moduli space}. In  \cite{Bis98}, I. Biswas studied the necessary and sufficient conditions that a choice of parabolic weights must satisfy for the moduli space $\cN^{ss}$ to be nonempty, thus determining what we call the \emph{weight polytope}, parametrizing parabolic weights for which stable parabolic bundles do exist. 
For every admissible degree $-2n < \deg(E) \leq -2$, the corresponding weight polytope contains multiple semistability walls, whose complement consists of a collection of open chambers, with points defining moduli spaces for which $\cN^{ss} = \cN^{s}$ (in such case, the moduli spaces turn out to be smooth, compact $(n-3)$-dimensional complex projective manifolds), and the holomorphic type of $\cN^{s}$ is an invariant of the weight chamber \cite{BH95}. We will only consider parabolic weights in such open chambers of the weight polytope, and we will denote $\cN^{s}$ simply by $\cN$.

Moreover, (ii) since $E \cong \mathcal{O} (m_{1}) \oplus \mathcal{O}(m_{2})$ \cite{Groth57}, the group $\Aut\left(E\right)$ is \emph{always a nontrivial Lie group}. The group of parabolic automorphisms of a stable parabolic bundle $E_{*}$ consists of all nonzero multiples of the identity, which act trivially on any parabolic structure. Thus, 
the residual group 
\[
\Aut\left(E\right)/\text{Par}\Aut(E) = P\left(\Aut\left(E\right)\right),
\]
is simply the projectivization of the group of bundle automorphisms of the underlying vector bundle $E$. The parabolic stability of $E_{*}$ necessarily implies that $H^{0}\left(\CC\PP^{1},E\right) = 0$, hence not only $\deg(E)  < 0$, but also 
\[
m_{1},m_{2}<0
\]

Even in the simplest rank 2 case under consideration, a plethora of weight chambers occurs, and suitable combinatorial tools must be developed in order to understand the resulting intricate complex manifold ``taxonomy".\footnote{There exist several examples in the literature (eg. \cite{Bauer91,Muk05,MY16}) where the relation of the variations of parabolic weights in genus 0 with the minimal model program and Hilbert's 14th problem are considered. We do not pursue such approach here. Casagrande \cite{Ca15} considers another explicit geometric model for a special choice of parabolic weights.} The coarsest holomorphic invariant that one can associate to an open weight chamber is the Harder-Narasimhan stratification of $\cN$, corresponding to the stratification by different splitting types of given degree admitting stable parabolic structures for such weights. When there is a single stratum, $\cN$ can be moreover realized as a quotient of a stable locus in $\left(\CC\PP^{1}\right)^{n}$ under the action of $\Aut(E)$. This article is devoted to solving the following problem:

\vspace{2mm}

\begin{displayquote}
\emph{To find optimal conditions that the parabolic weights must satisfy, in order to make such a quotient as simple as possible}.
\end{displayquote}

\vspace{2mm}

A general approach to study variations of parabolic weights and the consequential wall-crossing phenomena could be entirely formulated in terms of the simultaneous actions and orbit spaces of the groups of automorphisms for different admissible splitting types on a common space of $n$-tuples of flags.  The results of the general construction schemes and classification attempts will appear in \cite{MenSpi17}. This work represents a necessary step towards such a construction.

The present work concludes with an application of the constructed moduli space models. Namely, we provide an explicit set of algebraic equations (equations \eqref{eq:nilpotent cone}) that determine the nilpotent cone locus on $T^{*}\cN$ for each of the models of $\cN$ and the parabolic bundle generalization of the celebrated integrable system of Hitchin.



\section{Statement of results}

A Birkhoff-Grothendieck splitting type $\mathcal{O} (m_{1}) \oplus \mathcal{O}(m_{2})$ is called \emph{evenly-split} if 
\[
|m_{1}-m_{2}|\leq 1
\]
For every fixed degree, there is exactly one splitting type that is evenly-split, up to isomorphism. It is not difficult to prove \cite{Bis98} that the set of isomorphism classes of stable parabolic bundles in $\cN$ whose underlying vector bundle is isomorphic to an evenly-split bundle is nonempty and Zariski open. In general, $\cN$ would stratify according to the Harder-Narasimhan filtration types of the underlying vector bundles of its points. 

Let $k = \left[\sum_{i=1}^{n}\left(\alpha_{i1} + \alpha_{i2}\right)/2\right] = \left[ -\deg\left(E\right)/2\right] > 0$. Depending of the parity of their degree, evenly-split bundles are isomorphic to 
\begin{equation}\label{eq:splitting-type}
E \cong \left\{
\begin{array}{lr}
\mathcal{O}(-k)\oplus\mathcal{O}(-k) & \text{(even degree)}\\\\
\mathcal{O}(-(k+1))\oplus\mathcal{O}(-k) &  \text{(odd degree)}
\end{array}
\right.
\end{equation}
and moreover, any two such isomorphisms differ by postcomposition by an automorphism of the splitting. Let us fix a given isomorphism, once and for all.

\begin{lemma}
If a set of parabolic weights in the weight polytope satisfies the inequality
\begin{equation}\label{eq:ineq-even}
\sum_{i=1}^{n}\alpha_{i1} > \left[ -\deg\left(E\right)/2\right] - 1
\end{equation}
then, for every $\{E_{*}\}\in\cN$, the underlying bundle $E$ is evenly-split.
\end{lemma}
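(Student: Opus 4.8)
The plan is to argue by contradiction via a single destabilizing line subbundle. Suppose some $\{E_*\}\in\cN$ has underlying bundle $E\cong\mathcal{O}(m_1)\oplus\mathcal{O}(m_2)$ with $m_1\le m_2$ that is \emph{not} evenly-split, so $m_2-m_1\ge 2$. First I would extract the numerical consequence: since $m_1+m_2=\deg(E)$, one has $2m_2\ge\deg(E)+2$, hence $m_2\ge\deg(E)/2+1$, and by integrality of $m_2$ this gives $m_2\ge 1-k$ with $k=[-\deg(E)/2]$, in both the even- and odd-degree cases of \eqref{eq:splitting-type}. This is the only spot where the parity bookkeeping intervenes, and it is routine.

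Next I would test parabolic stability against the summand $L=\mathcal{O}(m_2)\hookrightarrow E$, a line subbundle of degree $m_2\ge 1-k$. The crucial uniform estimate --- valid for \emph{every} parabolic structure on $E$, not just a generic one --- is $\alpha'_i\ge\alpha_{i1}$ for all $i$, which is immediate from the definition of $\alpha'_i$ together with $\alpha_{i1}<\alpha_{i2}$. Combining this with $m_2\ge 1-k$ and the hypothesis \eqref{eq:ineq-even},
\[
\deg(L)+\sum_{i=1}^{n}\alpha'_i \geq (1-k)+\sum_{i=1}^{n}\alpha_{i1} > (1-k)+(k-1) = 0,
\]
which contradicts the parabolic stability of $E_*$. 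Hence no non-evenly-split $E$ admits a stable parabolic structure, and every $\{E_*\}\in\cN$ has evenly-split underlying bundle.

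There is no serious obstacle here; the proof is a one-line comparison once $m_2\ge 1-k$ is established. The two points deserving a moment's care are: (i) that the direct summand $\mathcal{O}(m_2)$ is genuinely a saturated line subbundle of $E$ to which the stability inequality applies, and indeed is of maximal degree among all line subbundles of $E$, so that one is confronting the ``worst'' destabilizer; and (ii) that one uses $\alpha'_i\ge\alpha_{i1}$ uniformly, rather than slipping into an assumption that the flags $L_i$ are in general position --- it is precisely this uniformity that upgrades the conclusion from the evenly-split Zariski-open stratum of $\cN$ to all of $\cN$. (One may also note that the negativity $m_1,m_2<0$ of the splitting coefficients is not actually needed for this argument, although it holds by stability.)
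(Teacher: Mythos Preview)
Your proof is correct and follows essentially the same approach as the paper's: both argue that a non-evenly-split bundle contains a line subbundle of degree at least $1-k$ (the paper phrases this as containing a subbundle isomorphic to $\mathcal{O}(-k+1)$, you more carefully use the direct summand $\mathcal{O}(m_2)$ with $m_2\ge 1-k$), and then the uniform bound $\alpha'_i\ge\alpha_{i1}$ together with \eqref{eq:ineq-even} shows this subbundle destabilizes every parabolic structure. Your version is more explicit about the parity bookkeeping and about why the estimate applies to \emph{all} parabolic structures, but the underlying idea is identical.
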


\begin{proof} 
It follows from \eqref{eq:splitting-type} that a splitting type that is not evenly-split would contain a subbundle isomorphic to $\mathcal{O}\left(- k + 1\right)$ (i.e., $\mathcal{O}(k - 1)\otimes E$ would have nowhere zero sections). The definition of parabolic stability and inequality \eqref{eq:ineq-even} implies that such subbundle would destabilize any parabolic structure with such parabolic weights in the given splitting type. 
\end{proof}

For the endomorphism bundle 
$\ad(E) := E^{\vee}\otimes E$, we have that, in both cases,
\[
\dim \End(E) := \dim H^{0}\left(\CC\PP^{1},\ad(E)\right) = 4
\]
Let us choose a basis of $\End(E)$ preserving the splitting of $E$. It readily follows that in terms of such basis, and for each $i =1,\dots, n$,  
\begin{equation}\label{eq:auto-restriction}
\Aut(E)|_{z_{i}} \cong \left\{
\begin{array}{lr}
\mathrm{GL}(2,\CC) & \text{(even degree)}\\\\
\mathrm{B}(2) &  \text{(odd degree)}
\end{array}
\right.
\end{equation} 
where $\mathrm{B}(2)$ is the group of $2\times 2$ invertible lower triangular matrices. 
For each $i =1,\dots, n$, let $\cF_{i} = \PP\left(E|_{z_{i}}\right)$ denote the flag manifold of the fiber $E|_{z_{i}}$. 
It follows that when $\deg(E)$ is even, the action of the group $P\left(\Aut(E)\right)$ on each $\cF_{i}$ by restriction is projective linear. Moreover, the space of subbundles $\mathcal{O}(-k)\hookrightarrow \mathcal{O}(-k)\oplus \mathcal{O}(-k)$ is isomorphic to $\CC\PP^{1}$, and hence provides an identification of all flag manifolds $\cF_{i}$ with $\CC\PP^{1}$, which is independent of the choice of isomorphism \eqref{eq:splitting-type}. Consequently, the actions of $P\left(\Aut(E)\right)$ on each $\cF_{i}$ get identified.

In the case when $\deg(E)$ is odd, there is a unique line subbundle of $E$ isomorphic to $\mathcal{O}(-k)$. Therefore, in such case, the bundle splitting determines a special ``infinity" line singled out, and consequently, each flag manifold $\cF_{i}$ admits a Schubert-Bruhat stratification 
\[
\cF_{i} = \cF_{i}^{0}\sqcup\{\mathcal{O}(-k)|_{z_{i}}\} \cong \CC\sqcup\{\infty\}
\]
The left action of $\Aut\left(E\right)$ on $\cF_{i}$ leaves the line $\mathcal{O}(-k)|_{z_{i}}$ fixed. If we let $u_{i}$ be the complex affine coordinate in $\cF_{i}^{0}$ given by the isomorphism $\cF_{i} \cong \CC\PP^{1}$, the action of any $g\in\Aut(E)$ on $\cF_{i}^{0}$ takes the form 
\begin{equation}\label{eq:odd-degree-action}
g|_{z_{i}}\cdot u_{i} =\left(b_{22}u_{i} + b_{i}\right)/b_{11}\qquad b_{11},b_{22} \in\CC^{*},\quad b_{i}\in\CC
\end{equation}
Consequently, there is an isomorphism
\begin{equation}\label{eq:automorphism-group}
P\left(\Aut\left(E\right)\right) \cong \left\{
\begin{array}{lr}
\mathrm{PSL}(2,\CC) & \text{(even degree)}\\\\
\left(\mathrm{N}(2) \times \mathrm{N}(2)\right)\rtimes \CC^{*} &  \text{(odd degree)}
\end{array}
\right.
\end{equation}
depending on a choice of a pair of points in $\{z_{1},\dots,z_{n}\}$ in the odd degree case (say $z_{n-1}$ and $z_{n}$, without any loss of generality), where $\mathrm{N}(2)\cong \CC$ denotes the complex 1-dimensional group of $2\times 2$ lower unipotent matrices, corresponding to the restriction of the subgroup of unipotent automorphisms of $P(\Aut(E))$ to the flag manifolds $\cF_{n-1}, \cF_{n}$. 


We have arrived at the following conclusion. If for a given fixed admissible degree, inequality \eqref{eq:ineq-even} is imposed, 
every stable parabolic bundle would be isomorphic to a bundle splitting type \eqref{eq:splitting-type}, together with an $n$-tuple of flags in 
\[
\cF_{1}\times\dots\times \cF_{n}
\]
Let us denote by $\cP^{s} \subset \cF_{1}\times\dots\times \cF_{n}$ the subset of $n$-tuples of flags that determine a stable parabolic bundle. Consequently there is an isomorphism 
\begin{equation}\label{eq:quotient}
\cN \cong P\left(\Aut\left(E\right)\right)\setminus \cP^{s}
\end{equation}

Since we understand the action of $P\left(\Aut(E)\right)$ on $\cF_{1}\times\dots\times \cF_{n}$ explicitly, it is natural to present candidates for the stable loci that induce optimal quotient spaces for each degree parity. Namely, let us consider, in the case when $\deg(E) = -2k$ is even, the locus  $\cL \subset \cF_{1}\times\dots\times \cF_{n}$, with respect to $n-3$ points in $\{z_{1},\dots,z_{n}\}$, say $z_{1},\dots,z_{n-3}$, for which the projection
\[
\mathrm{pr}_{1} : \cL \to \cF_{1}\times\dots\times\cF_{n-3}
\] 
is surjective, while the image of the projection
\[
\mathrm{pr}_{2} : \cL\to \cF_{n-2}\times\cF_{n-1}\times\cF_{n}
\]
equals $\cC_{3}$, the configuration space of triples in $\CC\PP^{1}$, under the previously given isomorphisms $\cF_{i}\cong \CC\PP^{1}$.

Moreover, in the case when $\deg(E) = -(2k+1)$ is odd, let us consider the locus $\cL\subset \cF_{1}\times\dots\times \cF_{n}$ 
\[
\cL = \cF^{0}_{1}\times\dots\times\cF^{0}_{n}\setminus \{\Aut(E)\cdot(0,\dots,0)\}
\]
where $\Aut(E)\cdot(0,\dots,0)$ denotes the orbit of $(0,\dots,0)$ under $\Aut(E)$.

\begin{lemma}
The action of the groups $P(\Aut(E))$ on the loci $\cL$ is free and proper. Consequently, $\cL$ is a principal $P(\Aut(E))$-bundle, and moreover, there is an isomorphism
\begin{equation}\label{eq:isomorphism-type}
P(\Aut(E))\setminus\cL \cong \left\{
\begin{array}{lr}
\left(\CC\PP^{1}\right)^{n-3} & \text{(even degree)}\\\\
\CC\PP^{n-3} &  \text{(odd degree)}
\end{array}
\right.
\end{equation}
\end{lemma}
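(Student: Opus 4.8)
The plan is to handle the two degree parities separately, in each case making the $P(\Aut(E))$-action on $\cL$ fully explicit from \eqref{eq:automorphism-group}--\eqref{eq:odd-degree-action} and reading off freeness, properness, and the quotient directly; the standard fact that a free and proper holomorphic action of a complex Lie group yields a holomorphic principal bundle over the quotient manifold then supplies the ``Consequently'' clause.

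\emph{Even degree.} Here $\cL=\mathrm{pr}_2^{-1}(\cC_3)=(\CC\PP^1)^{n-3}\times\cC_3$, with $\cC_3$ the configuration space of ordered triples of distinct points in $\CC\PP^1$, and $P(\Aut(E))\cong\mathrm{PSL}(2,\CC)$ acting diagonally under the identifications $\cF_i\cong\CC\PP^1$. The classical fact that $\mathrm{PSL}(2,\CC)$ acts simply transitively on $\cC_3$ gives, after fixing the reference triple $t_0=(0,1,\infty)$, a holomorphic assignment $t\mapsto g_t$ with $g_t\cdot t_0=t$; I would then verify that $(x,t)\mapsto(g_t^{-1}\cdot x,\,g_t)$ is a biholomorphism $\cL\xrightarrow{\ \sim\ }(\CC\PP^1)^{n-3}\times\mathrm{PSL}(2,\CC)$ intertwining the diagonal action on $\cL$ with left translation on the group factor of the target. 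Freeness, properness, and the identification of the quotient with $(\CC\PP^1)^{n-3}$ are then immediate from this model.

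\emph{Odd degree.} Write $u=(u_1,\dots,u_n)$ for the affine coordinates identifying $\cF_1^0\times\dots\times\cF_n^0$ with $\CC^n$. By \eqref{eq:odd-degree-action}, $g$ acts by $u\mapsto\lambda u+w$ with $\lambda=b_{22}/b_{11}\in\CC^*$ and $w=(b_i/b_{11})_i$ ranging over the $2$-dimensional subspace $V\subset\CC^n$ of evaluation vectors $(\ell(z_1),\dots,\ell(z_n))$ of linear forms $\ell\in H^0(\CC\PP^1,\mathcal{O}(1))$; thus $P(\Aut(E))\cong V\rtimes\CC^*$ with $\CC^*$ scaling $V$, the orbit of the origin is exactly $V$, and $\cL=\CC^n\setminus V$. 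Freeness is the one-line computation that $\lambda u+w=u$ with $u\notin V$ forces $\lambda=1$ and hence $w=0$. For the quotient I would push forward along the linear projection $q:\CC^n\to\CC^n/V\cong\CC^{n-2}$, which is $V$-invariant, $\CC^*$-equivariant for the scaling action, and satisfies $q^{-1}(0)=V$: it restricts to a principal $V$-bundle $\cL\to\CC^{n-2}\setminus\{0\}$, trivial because $q$ is linear, so $\cL/V\cong\CC^{n-2}\setminus\{0\}$, and quotienting by the residual group $\CC^*=P(\Aut(E))/V$ gives $P(\Aut(E))\setminus\cL\cong(\CC^{n-2}\setminus\{0\})/\CC^*=\CC\PP^{n-3}$.

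Properness is the only step that needs an actual argument rather than bookkeeping, and in the odd case it is the main (if modest) obstacle, because $P(\Aut(E))$ is non-reductive and $\cL$ is a non-trivial principal bundle over $\CC\PP^{n-3}$ (its $\CC^*$-reduction is the complement of the zero section in the tautological line bundle), so the product trivialization of the even case is unavailable. I would instead verify properness by hand: for compact sets $K_1,K_2\subset\cL$ and $g=(\lambda,w)$ with $g(K_1)\cap K_2\neq\emptyset$, the images $q(K_1)$ and $q(K_2)$ are compact in $\CC^{n-2}\setminus\{0\}$, hence bounded and bounded away from the origin, so the identity $q(gu)=\lambda\,q(u)$ confines $|\lambda|$ to a compact subinterval of $(0,\infty)$; then $w=gu-\lambda u$ lies in a compact set as well, whence $\{g:g(K_1)\cap K_2\neq\emptyset\}$ is relatively compact and closed, i.e.\ compact, which is exactly properness. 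The same argument works verbatim for even degree, using the continuity of the map $\cC_3\times\cC_3\to\mathrm{PSL}(2,\CC)$, $(t,t')\mapsto$ the unique $\gamma$ with $\gamma t=t'$, in place of $q$.
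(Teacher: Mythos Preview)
Your argument is correct and follows the same strategy as the paper: in even degree you exploit the simply transitive $\mathrm{PSL}(2,\CC)$-action on $\cC_3$ to trivialize $\cL$ as a product, and in odd degree you first quotient by the $2$-dimensional unipotent part (the paper does this by normalizing two affine coordinates to $0$, you do it via the linear projection $\CC^n\to\CC^n/V$, which is the same map in coordinate-free clothing) and then by the residual $\CC^*$. The only substantive difference is that the paper dismisses freeness and properness in a single sentence (``the explicit form of the actions makes it clear''), whereas you actually supply the properness argument in the odd case; your treatment is more complete on this point, though the extra work is routine.
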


\begin{proof}
The explicit form of the actions makes it clear that they are free and proper. If $\deg(E) = -2k$, then any $n$-tuple $(L_{1},\dots,L_{n})\in\cL$ admits a unique normalization to the special values $L_{n-2} = 1$, $L_{n-1} = 0$, $L_{n} = \infty$ under the left $\textrm{PSL}(2,\CC)$-action. Therefore, 
\[
P\left(\Aut\left(E\right)\right)\setminus\cL \cong \mathrm{pr}_{1}\left(\cL\right) = \cF_{1}\times\dots\times\cF_{n-3}\cong\left(\CC\PP^{1}\right)^{n-3}, 
\]
On the other hand, if $\deg(E) = -(2k+1)$,  the action of $\Aut(E)$ on $\cL$ allows a normalization locus given by 
\[
u_{n-2} = u_{n-1} =0,
\]
whose stabilizer is the residual group of projective diagonal automorphisms, which is isomorphic to $\CC^{*}$. Then, it follows that
\[
\mathrm{N}(2)\times\mathrm{N}(2)\setminus\cL\cong \CC^{n-2}\setminus\{(0,\dots,0)\}
\]
Moreover, in virtue of \eqref{eq:odd-degree-action},  it readily follows that in such case, we have that
\[
P\left(\Aut(E)\right)\setminus\cL \cong \CC\PP^{n-3}.
\]
\end{proof}


\begin{theorem}\label{theo:1}
When $\deg(E) = -2k$, $k = 1,\dots, n - 1$, and the inequality \eqref{eq:ineq-even} is imposed, it follows that $\cP^{s} = \cL$ if the parabolic weights are chosen in the open chamber of the weight polytope determined  by the 
inequalities  \eqref{eq:ineq-even} and 
\begin{equation}\label{eq:normalization}
\alpha_{i 2} + \alpha_{j 2} +\sum_{l\neq i,j} \alpha_{l1} > k,\qquad i \neq j,\quad i,j\in\{n-2,n-1,n\}
\end{equation}
Consequently, $\cN\cong \left(\CC\PP^{1}\right)^{n-3}$ in such case.
\end{theorem}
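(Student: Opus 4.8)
The plan is to show the two set inclusions $\cL \subseteq \cP^{s}$ and $\cP^{s}\subseteq \cL$ separately, working entirely with the explicit description of flags as lines in the evenly-split bundle $E\cong\mathcal{O}(-k)\oplus\mathcal{O}(-k)$ furnished by Lemma~1 and the identification $\cF_{i}\cong\CC\PP^{1}$. The key preliminary observation is a parabolic-degree computation: for a line subbundle $L\cong\mathcal{O}(d)\hookrightarrow E$, stability of the parabolic bundle determined by an $n$-tuple $(L_{1},\dots,L_{n})$ is the family of inequalities $d + \sum_{i}\alpha'_{i} < 0$, and since $m_{1}=m_{2}=-k$ forces $d\le -k$, only the sub-line-bundles of degree exactly $-k$ can possibly destabilize (those of lower degree automatically satisfy the inequality, using $\sum\alpha_{i2}<n$ and $k\le n-1$; this should be checked, but is a short estimate). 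A degree $-k$ sub-line-bundle corresponds to a point $p\in\CC\PP^{1}$ in the space of splittings, and the condition ``$L|_{z_{i}} = L_{i}$'' becomes ``$L_{i}=p$'' under the identifications. So parabolic stability of $(L_{1},\dots,L_{n})$ is \emph{equivalent} to: for every $p\in\CC\PP^{1}$,
\[
-k + \sum_{i\,:\,L_{i}=p}\alpha_{i2} + \sum_{i\,:\,L_{i}\ne p}\alpha_{i1} < 0,
\]
i.e. $\sum_{i\,:\,L_{i}=p}\alpha_{i2} + \sum_{i\,:\,L_{i}\ne p}\alpha_{i1} < k$ for all $p$. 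Only finitely many $p$ (the ones among $L_{1},\dots,L_{n}$) need be tested, since for generic $p$ the left side is $\sum_{i}\alpha_{i1}<k-1<k$ by~\eqref{eq:ineq-even}.

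Granting this reformulation, I would first prove $\cP^{s}\subseteq\cL$, equivalently $\cF_{1}\times\cdots\times\cF_{n}\setminus\cL\subseteq\cF_{1}\times\cdots\times\cF_{n}\setminus\cP^{s}$. By the definition of $\cL$ in the even case via the two projections, a tuple fails to lie in $\cL$ exactly when its last three coordinates $(L_{n-2},L_{n-1},L_{n})$ are \emph{not} a configuration of distinct points, i.e. at least two of them coincide. Suppose $L_{i}=L_{j}=p$ with $\{i,j\}\subseteq\{n-2,n-1,n\}$. Then the destabilizing estimate at $p$ gives a lower bound $\ge \alpha_{i2}+\alpha_{j2}+\sum_{l\ne i,j}\alpha_{l1}$, which by hypothesis~\eqref{eq:normalization} exceeds $k$ — so the stability inequality is violated and the tuple is not in $\cP^{s}$. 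This is the role of inequality~\eqref{eq:normalization}: it rules out coincidences among the three distinguished flags.

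Next I would prove $\cL\subseteq\cP^{s}$: every tuple in $\cL$ is parabolically stable. Take $(L_{1},\dots,L_{n})\in\cL$ and an arbitrary $p\in\CC\PP^{1}$; I must verify $\sum_{i:L_{i}=p}\alpha_{i2}+\sum_{i:L_{i}\ne p}\alpha_{i1}<k$. The worst case is when $p$ coincides with as many $L_{i}$ as possible. Since $(L_{n-2},L_{n-1},L_{n})$ are forced to be distinct (tuples in $\cL$ project onto the configuration space $\cC_{3}$), $p$ can equal at most one of the last three coordinates; the first $n-3$ coordinates are unconstrained and may all equal $p$. So the extreme value of the left side is bounded by
\[
\sum_{l=1}^{n-3}\alpha_{l2} + \alpha_{i2} + \sum_{l\in\{n-2,n-1,n\}\setminus\{i\}}\alpha_{l1}
\]
for some $i\in\{n-2,n-1,n\}$ (or, if $p$ hits none of the last three, by $\sum_{l=1}^{n-3}\alpha_{l2}+\sum_{l=n-2}^{n}\alpha_{l1}$, which is dominated by the former). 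The claim is that this quantity is $<k$ precisely because the parabolic weights lie in the open weight polytope together with~\eqref{eq:ineq-even}. Here I would invoke Biswas's description~\cite{Bis98} of the weight polytope: the polytope is cut out by the family of inequalities $\sum_{i\in I}\alpha_{i2}+\sum_{i\notin I}\alpha_{i1} < k$ ranging over subsets $I$ of a suitable size (the parabolic-degree-zero normalization makes $-\deg E/2$ enter as the bound), and the particular subsets $I=\{1,\dots,n-3,i\}$ that appear here are exactly of the admissible form. The bookkeeping of which subsets occur — and confirming that~\eqref{eq:ineq-even} plus membership in the polytope yields all the needed inequalities with no gap — is the step I expect to be the main obstacle, since it requires matching the ad hoc normalization of the distinguished triple $\{z_{n-2},z_{n-1},z_{n}\}$ against Biswas's intrinsic polytope facets. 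Once both inclusions are established, $\cP^{s}=\cL$, and then Lemma~2 (equation~\eqref{eq:isomorphism-type}) together with the quotient isomorphism~\eqref{eq:quotient} immediately gives $\cN\cong P(\Aut(E))\setminus\cL\cong(\CC\PP^{1})^{n-3}$, completing the proof.
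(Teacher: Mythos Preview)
Your overall strategy matches the paper's: reduce to degree $-k$ subbundles, rephrase stability as the inequality indexed by $p\in\CC\PP^{1}$, and check the two inclusions $\cP^{s}\subset\cL$ and $\cL\subset\cP^{s}$ separately. The $\cP^{s}\subset\cL$ direction is exactly as in the paper.

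For $\cL\subset\cP^{s}$, however, the gap you flag as ``the main obstacle'' is simpler to close than you think, and your proposed route through Biswas's polytope is the wrong one. You correctly isolate the worst-case quantity
\[
\sum_{l=1}^{n-3}\alpha_{l2} + \alpha_{i'2} + \alpha_{i1} + \alpha_{j1},\qquad \{i,j,i'\}=\{n-2,n-1,n\},
\]
but membership in the weight polytope does \emph{not} bound this by $k$: the polytope only guarantees the existence of \emph{some} stable configuration, not stability of every tuple in $\cL$. The correct observation, which the paper makes, is that the parabolic-degree-zero constraint $\sum_{l}(\alpha_{l1}+\alpha_{l2})=2k$ renders the hypothesis \eqref{eq:normalization} \emph{equivalent} to
\[
\sum_{l\neq i,j}\alpha_{l2} + \alpha_{i1} + \alpha_{j1} < k,\qquad i\neq j,\; i,j\in\{n-2,n-1,n\},
\]
which is precisely your worst-case bound. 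The paper records this equivalence as \eqref{eq:compactification} and uses it directly. (Your side estimate for subbundles of degree $\le -(k+1)$ also needs the degree-zero identity: \eqref{eq:ineq-even} gives $\sum_{i}\alpha_{i2}<k+1$, whereas your stated bound $\sum_{i}\alpha_{i2}<n$ with $k\le n-1$ is too weak.)

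Finally, you omit a step the paper carries out: verifying that the chamber determined by \eqref{eq:ineq-even} and \eqref{eq:normalization} is nonempty. The paper exhibits explicit weights depending on parameters $\epsilon,\delta$ with $(n-3)\epsilon<\delta<n/(4k)$.
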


\begin{theorem}\label{theo:2}
In the case when $\deg(E) = -\left(2k + 1\right)$, $k = 1,\dots, n-2, $ and the inequality \eqref{eq:ineq-even} is imposed, it follows that $\cP^{s} = \cL$ if the parabolic weights are chosen in the open chamber of the weight polytope determined by the inequalities \eqref{eq:ineq-even} and
\begin{equation}\label{eq:infty}
\alpha_{i2} +\sum_{j \neq i}\alpha_{j1} > k,\qquad i=1,\dots,n.
\end{equation}
\begin{equation}\label{eq:large-Bruhat}
\sum_{i=1}^{n}\alpha_{i1} < k
\end{equation}
Consequently, $\cN \cong \CC\PP^{n-3}$ in such case.
\end{theorem}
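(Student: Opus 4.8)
The plan is to reduce Theorem~\ref{theo:2} to the single set-theoretic identity $\cP^{s}=\cL$ inside $\cF_{1}\times\dots\times\cF_{n}$. Since \eqref{eq:ineq-even} is assumed, Lemma~1 forces the underlying bundle of every point of $\cN$ to be the evenly-split bundle $E\cong\mathcal{O}(-(k+1))\oplus\mathcal{O}(-k)$ of \eqref{eq:splitting-type}, so the isomorphism \eqref{eq:quotient} applies; granting $\cP^{s}=\cL$, Lemma~2 then yields $\cN\cong P(\Aut(E))\setminus\cL\cong\CC\PP^{n-3}$. Note that $\cL$ is manifestly nonempty (it is $\CC^{n}$ minus a $2$-plane, and $n\ge 3$ in the admissible range), so once $\cP^{s}=\cL$ is established it follows in particular that stable parabolic bundles exist for these weights, i.e.\ that the weights lie in the weight polytope; checking that they fill out a single open chamber is routine.

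The core of the argument is a bookkeeping of the line subbundles $L\hookrightarrow E$ in terms of the affine coordinates $u_{i}$ on $\cF_{i}^{0}$. One checks that $\Hom(\mathcal{O}(d),E)\ne 0$ exactly for $d\le -k$; that the unique line subbundle of degree $-k$ is $M=\{0\}\oplus\mathcal{O}(-k)$, with $M|_{z_{i}}$ the ``infinity'' line $\mathcal{O}(-k)|_{z_{i}}$; and that the line subbundles of degree $-(k+1)$ form the family $\{L_{g}\}_{g\in H^{0}(\mathcal{O}(1))}$, where $L_{g}|_{z_{i}}$ is the line of coordinate $g(z_{i})\in\CC=\cF_{i}^{0}$ in the normalization of \eqref{eq:odd-degree-action}, and $L_{0}$ is the complementary summand $\mathcal{O}(-(k+1))\oplus\{0\}$. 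A direct consequence, via the description \eqref{eq:odd-degree-action} of the $\Aut(E)$-action (which acts transitively on $\{L_{g}\}$), is that the orbit $\Aut(E)\cdot(0,\dots,0)$ removed in the definition of $\cL$ is exactly the image $V$ of the evaluation map $H^{0}(\mathcal{O}(1))\to\CC^{n}$, $g\mapsto(g(z_{1}),\dots,g(z_{n}))$, i.e.\ the set of flag $n$-tuples met simultaneously by some $L_{g}$. For a line subbundle $L$, write $I(L)=\{\,i:L|_{z_{i}}=L_{i}\,\}$ and $f_{L}(I):=\deg L+\sum_{i\in I}\alpha_{i2}+\sum_{i\notin I}\alpha_{i1}$; since $\alpha_{i1}<\alpha_{i2}$, the quantity $f_{L}(I)$ is strictly increasing in $I$, and parabolic stability against $L$ is precisely $f_{L}(I(L))<0$.

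To prove $\cL\subseteq\cP^{s}$, fix $(u_{i})\in\cL$. Every $L$ of degree $\le-(k+2)$ satisfies $f_{L}(I(L))\le-(k+2)+\sum_{i}\alpha_{i2}=k-1-\sum_{i}\alpha_{i1}<0$ by \eqref{eq:ineq-even}. For $M$: since no $u_{i}$ is $\infty$, we have $I(M)=\varnothing$, so $f_{M}(\varnothing)=-k+\sum_{i}\alpha_{i1}<0$ by \eqref{eq:large-Bruhat}. For each $L_{g}$: since $(u_{i})\notin V$ we have $I(L_{g})\subsetneq\{1,\dots,n\}$, so, choosing $j_{0}\notin I(L_{g})$ and invoking monotonicity, $f_{L_{g}}(I(L_{g}))\le f_{L_{g}}(\{1,\dots,n\}\setminus\{j_{0}\})$, and substituting $\sum_{i}(\alpha_{i1}+\alpha_{i2})=2k+1$ rewrites the right-hand side as $k-\bigl(\alpha_{j_{0}2}+\sum_{i\ne j_{0}}\alpha_{i1}\bigr)$, which is negative by \eqref{eq:infty} for the index $j_{0}$. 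Hence every line subbundle is strictly below the stability threshold and $(u_{i})\in\cP^{s}$.

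For the reverse inclusion, given $(u_{i})\notin\cL$ I would exhibit a destabilizing subbundle. If some $u_{j}=\infty$, then $j\in I(M)$, so $f_{M}(I(M))\ge f_{M}(\{j\})=\alpha_{j2}+\sum_{i\ne j}\alpha_{i1}-k>0$ by \eqref{eq:infty}. If instead $(u_{i})\in V$, pick $g$ with $g(z_{i})=u_{i}$ for all $i$; then $I(L_{g})=\{1,\dots,n\}$ and $f_{L_{g}}(\{1,\dots,n\})=-(k+1)+\sum_{i}\alpha_{i2}=k-\sum_{i}\alpha_{i1}>0$ by \eqref{eq:large-Bruhat}. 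Either way $(u_{i})$ is unstable, so $\cP^{s}=\cL$ and the theorem follows from \eqref{eq:quotient} and Lemma~2. The step I expect to be the main obstacle is the dictionary of the second paragraph: pinning down which line subbundles restrict to which flags, identifying $\Aut(E)\cdot(0,\dots,0)$ with $V$, and making sure no destabilizing subbundle of degree $\le-(k+2)$ has been missed. Once that is in place, the three inequalities \eqref{eq:ineq-even}, \eqref{eq:infty} and \eqref{eq:large-Bruhat} drop out of the monotonicity of $f_{L}$ essentially by design, each being exactly the assertion that one extremal choice of $I(L)$ fails to destabilize; the even-degree Theorem~\ref{theo:1} then admits a parallel treatment, with $\mathrm{PSL}(2,\CC)$ and the configuration space $\cC_{3}$ in place of the group \eqref{eq:automorphism-group}.
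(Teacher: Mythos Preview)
Your proof is correct and follows essentially the same route as the paper: reduce by \eqref{eq:ineq-even} to subbundles of degree $-k$ and $-(k+1)$, use \eqref{eq:infty} and \eqref{eq:large-Bruhat} respectively to see exactly which flag tuples they destabilize, and conclude $\cP^{s}=\cL$. The only real difference is cosmetic: the paper first passes to the $\Aut(E)$-slice $u_{n-1}=u_{n}=0$ and argues there, whereas you work on all of $\cL$ and make the identification $\Aut(E)\cdot(0,\dots,0)=V$ explicit via the evaluation map for $H^{0}(\mathcal{O}(1))$; your version is arguably cleaner.

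The one item you wave off as ``routine'' is actually carried out in the paper: you never verify that weights satisfying \eqref{eq:ineq-even}, \eqref{eq:infty} and \eqref{eq:large-Bruhat} simultaneously \emph{exist}. Your argument that $\cL\neq\varnothing$ implies membership in the weight polytope is conditional on already having such weights. The paper supplies the explicit family $\alpha_{i1}=k(1-\epsilon)/n$, $\alpha_{i2}=(k(1+\epsilon)+1)/n$ with $0<\epsilon<1/(k(n-2))$; you should include this or an equivalent check.
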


\section{Proof of theorems \ref{theo:1} and \ref{theo:2}}\label{sec:proofs}

\begin{proof}[\textbf{Proof of theorem 1}]
The weight inequality  \eqref{eq:ineq-even} not only guarantees that all underlying vector bundles admitting stable parabolic structures with such weights would be evenly-split, but also that the only subbundles that could destabilize a given parabolic structure are isomorphic to $\mathcal{O}(-k)$. Such subbundles are parametrized by the space 
\[
\PP\left(H^{0}\left(\CC\PP^{1},\mathcal{O}\oplus\mathcal{O}\right)\right) \cong \CC\PP^{1}
\]
(in homogeneous flag manifold coordinates, such line subbundles correspond to $[b_{0}:b_{1}]$, for $(b_{0},b_{1})\neq(0,0)$).
The inequalities \eqref{eq:normalization} are equivalent to the inequalities
\begin{equation}\label{eq:compactification}
\sum_{l\neq i,j} \alpha_{l2} + \alpha_{i 1} + \alpha_{j 1} < k, \qquad i \neq j,\quad i,j\in\{n-2,n-1,n\}
\end{equation}
which imply that, in fact, all parabolic structures with arbitrary common components in $\cF_{1},\dots,\cF_{n-3}$ would 
be stable, provided that 
\[
L_{n - 2}\neq L_{n - 1},\qquad L_{n - 2}\neq L_{n},\qquad L_{n - 1}\neq L_{n}
\]
under the isomorphisms $\cF_{n-2}\cong\cF_{n-1}\cong\cF_{n}\cong \CC\PP^{1}$. Hence we 
have the inclusion
\[
\cF_{1}\times\dots\times\cF_{n-3}\times \cC_{3} \subset\cP^{s},
\]
where 
$\cC_{3}$ denotes the configuration space of triples in $\CC\PP^{1}$, thought of as a subspace of $\cF_{n-2}\times\cF_{n-1}\times \cF_{n}$.
Moreover, the inequalities \eqref{eq:normalization} were tailored to ensure that, indeed,
\[
\cF_{1}\times\dots\times\cF_{n-3}\times \cC_{3} = \cP^{s}
\]
in other words, we have that $\cP^{s} = \cL$.

To conclude, it remains to show that the open chamber determined by the inequalities \eqref{eq:ineq-even} and 
\eqref{eq:normalization} is nonempty. 
Let $\epsilon, \delta$ be two real numbers, constrained to satisfy the a priori inequalities
\[
0 < \epsilon, \delta < \text{min}\left\{1,\frac{n}{k} - 1\right\}
\]
Consider the real numbers 
\[
\alpha_{i1} = \left\{
\begin{array}{cc}
\displaystyle\frac{(1 - \epsilon)k}{n}, & i=1,\dots, n-3 \\\\
 \displaystyle\frac{(1 - \delta) k}{n} & i = n-2, n-1, n
\end{array}
\right.
\]
and 
\[
\alpha_{i2} =  \left\{
\begin{array}{cc}
\displaystyle\frac{(1 + \epsilon)k}{n}, & i=1,\dots, n-3 \\\\
 \displaystyle\frac{(1 + \delta) k}{n} & i = n-2, n-1, n
\end{array}
\right.
\]
which are easily seen to determine a collection of honest parabolic weights for a degree $-2k$ vector bundle.  It is readily seen that the inequalities \eqref{eq:normalization} (hence, also \eqref{eq:compactification}) would be satisfied provided that the real numbers $\epsilon$ and $\delta$ are constrained to satisfy
\begin{equation}\label{eq:epsilon-delta}
(n - 3)\epsilon < \delta
\end{equation}
The remaining inequality \eqref{eq:ineq-even} will also follow from \eqref{eq:epsilon-delta} if we also require that
\[
\delta < \frac{n}{4k}.
\]
\end{proof}

\begin{proof}[\textbf{Proof of theorem 2}] 
As in the proof of theorem \ref{theo:1}, the weight inequality  \eqref{eq:ineq-even} not only guarantees that all underlying vector bundles admitting stable parabolic structures with such weights would be evenly-split, but also that the only subbundles that could destabilize a given parabolic structure are either isomorphic to $\mathcal{O}(-k)$ or $\mathcal{O}(-(k + 1))$. Thus, we will consider each case individually. 

We know that there is essentially one degree $-k$ subbundle $\mathcal{O}(-k)\hookrightarrow E$, and the inequalities \eqref{eq:infty} ensure that any parabolic structure for which $L_{i} =  \mathcal{O}(-k)|_{z_{i}}$, for some $i = 1,\dots,n$, would be unstable. Hence, any stable parabolic structure necessarily lies in the open affine subspace $\cF^{0}_{1}\times\dots\times\cF^{0}_{n}$. Moreover, no parabolic structure in $\cF^{0}_{1}\times\dots\times\cF^{0}_{n}$ could be destabilized by the subbundle $\mathcal{O}(-k)$.

Now, using the group of bundle automorphisms $\Aut(E)$, we can normalize any element in $\cF^{0}_{1}\times\dots\times\cF^{0}_{n}$ to $u_{n - 1} = u_{n} = 0$. Inequality \eqref{eq:large-Bruhat} implies that a degree $-(k+1)$ subbundle $\iota : \mathcal{O}(-(k + 1))\hookrightarrow E$ would destabilize a parabolic structure if and only if $\iota\left(\mathcal{O}(-(k + 1))\right)|_{z_{i}} = L_{i}$ for all $i = 1,\dots, n$. The normalization of the parabolic structure implies there is exactly one degree $-(k+1)$  subbundle with that property, namely, the first direct summand $\mathcal{O}(-(k + 1))$. Consequently, the only unstable parabolic structure in $\cF^{0}_{1}\times\dots\times\cF^{0}_{n-2}\times\{0\}\times\{0\}$ corresponds to $(0,\dots,0)$, which is stabilized by the residual $\CC^{*}$-action of $P\left(\Aut(E)\right)$. Consequently, in the present case, it also follows that $\cP^{s} = \cL$.

It remains to show that the open chamber determined by the inequalities \eqref{eq:ineq-even} and \eqref{eq:infty}--\eqref{eq:large-Bruhat} is nonempty. To see this, let us consider the real numbers 
\[
\alpha_{i1} = \frac{k(1-\epsilon)}{n},\qquad \alpha_{i2} = \frac{k(1 + \epsilon) + 1}{n},\qquad i=1,\dots,n
\]
The inequality \eqref{eq:large-Bruhat} implies that  $k + 1 < n$, i.e., $-2n+1 < \deg(E)$. Then, it is straightforward to verify that such numbers will define honest parabolic weights, and will satisfy all of the inequalities \eqref{eq:ineq-even} and \eqref{eq:infty}--\eqref{eq:large-Bruhat} if the following constraint is imposed
\[
0 < \epsilon < \frac{1}{k(n-2)}.
\]
\end{proof}

\begin{remark}
We have adopted the terminology and conventions of \cite{MS80} to define parabolic degree and stability, which are the original ones. Under the Mehta-Seshadri correspondence, there is an equivalence between isomorphism classes of stable parabolic bundles of parabolic degree 0 with prescribed parabolic weights, and isomorphism classes of irreducible unitary representations of 
\[
\pi_{1}\left(\CC\PP^{1}\setminus S, z_{0}\right)\cong \langle\gamma_{1},\dots,\gamma_{n}\,;\, \gamma_{1}\cdot\dots\cdot\gamma_{n}\rangle, 
\]
with prescribed conjugacy classes 
\[
\rho\left(\gamma_{i}\right) \in \Ad\left(\mathrm{PSU}(2)\right)\begin{pmatrix}
e^{2\pi\sqrt{-1}\alpha_{i1}} & 0\\
0 & e^{2\pi\sqrt{-1}\alpha_{i2}} 
\end{pmatrix}
\]
However, most of the treatments of the moduli problem for parabolic bundles over the Riemann sphere (eg. \cite{Bauer91,Muk05,MY16}) consider the additional restriction 
\begin{equation}\label{eq:SU(2)}
\alpha_{i2} = 1- \alpha_{i1} > 1/2,
\end{equation}
i.e., $\rho(\gamma_{i})\in \mathrm{SU}(2)$ for every $i=1,\dots,n$. In general, the corresponding weight polytopes in each case are intrinsically different. For instance, condition \eqref{eq:SU(2)} constrains the parity of the degree of $E$ to be equal to the parity of the number of flags in the parabolic structure. From the character variety point of view, it is not always possible to reduce a unitary representation to a special unitary one: to construct such a reduction, it is a necessary  and sufficient condition that a square root of $\det\left(\rho\right)$ exists. In particular, under our hypotheses, the construction of the optimum open weight chambers, and consequently the validity of theorems \ref{theo:1} and \ref{theo:2}, works for all $n\geq 4$. When the restriction \eqref{eq:SU(2)} is imposed, the optimum open weight chambers exist only if $n>4$.
\end{remark}

\section{Nilpotent cone loci}\label{sec:nilpotent}

A \emph{parabolic Higgs field} $\Phi$ on a parabolic bundle $E_{*}$ is a meromorphic $\End(E)$-valued differential on $\CC\PP^{1}$, holomorphic on $\CC\PP^{1}\setminus\{z_{1},\dots,z_{n}\}$, and with simple poles at each $z_{i}$ whose residues belong to the Lie algebras 
\[
\mathfrak{n}(F_{i})\subset\mathfrak{gl}\left(E|_{z_{i}}\right) 
\]
of the unipotent radicals for the parabolic subgroups $\mathrm{P}(F_{i})\subset \GL\left(E|_{z_{i}}\right)$ stabilizing each flag $F_{i}$. The space of parabolic Higgs fields on $E_{*}$
is equivalently determined as
\[
H^{0}\left(\CC\PP^{1},\text{Par}\End(E_{*})^{\vee}\otimes K_{\CC\PP^{1}}\right)
\]
where $\text{Par}\End(E_{*})\to \CC\PP^{1}$ is the vector bundle associated to the subsheaf of endomorphisms of $E$ preserving the parabolic structure of $E_{*}$. Consequently, the space of parabolic Higgs fields is dual to the space of infinitesimal deformations of $E_{*}$, and when $E_{*}$ is  stable, parabolic Higgs fields model the holomorphic cotangent space $T^{*}_{\{E_{*}\}}\cN$. 
 
The construction of the general Hitchin integral systems on moduli spaces of stable pairs \cite{Hitchin87a} relies on the construction of conjugation invariants of a Higgs field on a vector bundle (with or without a parabolic structure). In rank 2, there are essentially two associated conjugation invariants, namely $\tr(\Phi)$ and $\tr(\Phi^{2})$. Since in genus 0 it follows that $\tr(\Phi)\in H^{0}\left(\CC\PP^{1},K_{\CC\PP^{1}}\right)$, a parabolic Higgs field on $\CC\PP^{1}$ is necessarily traceless. Letting $D = z_{1} + \dots + z_{n}$, it readily follows that the image of any parabolic Higgs field under the quadratic map $\Phi \mapsto \tr\left(\Phi^{2}\right)$ belongs to the $(n - 3)$-dimensional space 
\[
H^{0}\left(\CC\PP^{1},K^{2}_{\CC\PP^{1}}(D)\right)
\]
of meromorphic quadratic differentials on $\CC\PP^{1}$ with at most simple poles on $D$. 
Following Laumon  \cite{Laumon88}, we say that a parabolic bundle $E_{*}$ is \emph{very stable} if it doesn't support nilpotent Higgs fields, i.e., if $\tr(\Phi^{2}) = 0$ implies that $\Phi = 0$. Since for every choice of parabolic weights, $T^{*}\cN$ is Zariski open in the corresponding moduli space of stable parabolic Higgs pairs $\cM$, a first step towards the description of the \emph{nilpotent cone} on  $\cM$ is to provide a precise description of it on $T^{*}\cN$, as well as its ``trace" on $\cN$. By definition, the nilpotent cone locus $\cD$ on $T^{*}\cN$ is the subspace of isomorphism classes of pairs $\{(E_{*},\Phi)\}$ for which $E_{*}$ is parabolic stable and $\tr(\Phi^{2}) = 0$. 

As an application of theorems \ref{theo:1} and \ref{theo:2}, we will provide a characterization of the nilpotent cone locus $\cD$ for the optimum weight chamber moduli space models of rank 2 stable parabolic bundles on $\CC\PP^{1}$. Such characterization is explicit and only depends on the parity of the degree of the underlying evenly-split bundles.\footnote{A more abstract characterization of the nilpotent cone for moduli of parabolic bundles on $\CC\PP^{1}$ is given by Kato in \cite{Kato02}.} In order to do so, it is necessary to provide a suitable model for the cotangent bundles $T^{*}\cN$. Although there is a straightforward construction of $T^{*}\cN$ following the explicit nature of the models for $\cN$, we will consider another description of $T^{*}\cN$ suited to define Hitchin's integrable system, in terms of ``universal" spaces $\cH_{\textrm{even}}$ and $\cH_{\textrm{odd}}$ containing all rank 2 parabolic Higgs fields for each degree parity. 

The fundamental idea is based on the following trivial observation. Given a 2-dimensional vector space $V$, the nilpotent elements in $\mathfrak{sl}(V)$ form a Zariski closed set $\cV$ determined by the equation $\tr\left(B^{2}\right) = 0$, the so-called \emph{nilpotent cone}. There is a projection $\cV\setminus\{0\} \to \cF(V)\cong \PP(V)$ given in terms of the kernel line of each $B\in \cV\setminus\{0\}$, and the inverse image of a given flag $F \in \cF(V)$ equals $\mathfrak{n}(F)\setminus\{0\}$.  Let us consider the space 
\[
\cV_{1}\times\dots\times\cV_{n}
\]
where for each $i = 1, \dots, n$, $\cV_{i}$ is the nilpotent cone in $\mathfrak{sl}\left(E|_{z_{i}}\right)$. Then there are projections $\cV_{i}\setminus\{0\}\to \cF_{i}$ for each $i =1,\dots, n$, and moreover, for every parabolic bundle $E_{*}$ there is an injective map 
\begin{equation}\label{eq:residue map}
\iota_{E_{*}} : H^{0}\left(\CC\PP^{1},\textrm{Par}\End (E_{*})^{\vee}\otimes K_{\CC\PP^{1}}\right) \hookrightarrow \cV_{1}\times\dots\times\cV_{n}
\end{equation}
determined as $\Phi \mapsto (B_{1},\dots,B_{n})$, where 
\[
B_{i} = \Res_{z = z_{i}}(\Phi)\qquad i = 1,\dots, n,
\]
since every parabolic Higgs field on $E_{*}$ with zero residues is necessarily trivial. 

Let us assume from now on, without any loss of generality, that $z_{n - 2} = 1$, $z_{n - 1} = 0$, and $z_{n} = \infty$. Moreover, let us consider an evenly-split bundle together with a choice of isomorphism \eqref{eq:splitting-type}. Such choice implies an identification of each $\cV_{i}$ with the Zariski closed space of $2\times 2$ complex nilpotent matrices. In order to construct the required models for $T^{*}\cN$, it is necessary to define two auxiliary spaces $\cH_{\textrm{even}}$ and $\cH_{\textrm{odd}}$ of admissible parabolic Higgs fields.

\subsection{Even degree}
Define $\cH_{\textrm{even}}$ to be the space of meromorphic matrix-valued differentials of the form
\begin{equation}\label{eq:diff1}
\Phi(z) = \left(\sum_{i = 1}^{n-1} \frac{B_{i}}{z - z_{i}}\right)\mathrm{d}z\qquad B_{i} \in \cV,\quad i = 1,\dots,n - 1
\end{equation}
and such that 
\[
B_{n} = - \displaystyle\sum_{i = 1}^{n - 1}B_{i} \in \cV. 
\]
Then, there is an obvious inclusion 
\[
\cH_{\textrm{even}}\hookrightarrow \cV_{1}\times\dots\times\cV_{n}. 
\]
More importantly, every parabolic Higgs field on a parabolic bundle $E_{*}$ for which $E\cong \mathcal{O}(-k)^{2}$ gives rise to a meromorphic matrix-valued differential \eqref{eq:diff1} by restriction to the affine trivialization over $\CC_{0}\subset\CC\PP^{1}$. It readily follows that any inclusion \eqref{eq:residue map} factors through an intermediate inclusion
\begin{equation}\label{eq:inclusion}
\iota'_{E_{*}}:  H^{0}\left(\CC\PP^{1},\textrm{Par}\End (E_{*})^{\vee}\otimes K_{\CC\PP^{1}}\right) \hookrightarrow \cH_{\textrm{even}}.
\end{equation}

\subsection{Odd degree} The Zariski open subset  $\cV'\subset\cV$ is defined as the union of $0$ and the set of nonzero elements whose kernel line is different from $\infty$. $\cV'$ is parametrized by the matices of the form
\begin{equation}\label{eq:V-coord}
B = c\begin{pmatrix}
- u & 1\\
- u^{2} & u
\end{pmatrix}
\end{equation}
where $u\in\CC \cong \CC\PP^{1}\setminus\{\infty\}$ is a coordinate for the kernel eigenline with induced flag $F$, and $c\in\CC$ is a coordinate for $\mathfrak{n}(F)$. In analogy to $\cH_{\textrm{even}}$, define $\cH_{\textrm{odd}}$ to be the space of meromorphic matrix-valued differentials of the form
\begin{equation}\label{eq:diff2}
\Phi(z) = \left(\sum_{i = 1}^{n-1} \frac{B_{i}}{z - z_{i}} + 
c_{n}\begin{pmatrix}
0 & 0\\
u^{2}_{n} & 0
\end{pmatrix}
\right)\mathrm{d}z
\end{equation}
such that  
\[
\Res_{z = \infty}\Ad\begin{pmatrix}
 z^{k + 1} & 0\\
0 & z^{k} 
\end{pmatrix}\left(\Phi\right) = c_{n}\begin{pmatrix}
- u_{n} & 1\\
- u^{2}_{n} & u_{n}
\end{pmatrix}
\]
which is the local expression over $\CC_{0}\subset \CC\PP^{1}$ of any parabolic Higgs field for any parabolic structure over $\mathcal{O}(-(k+1))\oplus\mathcal{O}(-k)$ restricted to lie in $\cF_{1}^{0}\times\dots\times\cF^{0}_{n}$.

The proof of the following lemma is straightforward, and incorporates the parabolic Higgs fields into a simple geometric model for the cotangent bundles $T^{*}\cN$. 

\begin{lemma}
Consider a choice of parabolic weights as in theorems \ref{theo:1} and \ref{theo:2}. For each corresponding stable locus $\cP^{s}$, let $\cQ^{s}_{\textrm{\emph{even}}}$ $(\text{resp.}\; \cQ^{s}_{\textrm{\emph{odd}}})$ be the set
\[
\cQ^{s}_{\textrm{\emph{even}}} = \left\{(\mathbf{F},\Phi)\in\cP^{s}\times \cH_{\text{\emph{even}}}\; :\; B_{i}\in \mathfrak{n}(F_{i})\right\}\qquad (\text{resp.}\; \cH_{\textrm{\emph{odd}}})
\]
whose defining property is interpreted as an ``incidence correspondence". Then, it follows that 
\[
T^{*}\cN \cong \left\{
\begin{array}{ccc}
\Aut(E)\setminus \cQ^{s}_{\textrm{\emph{even}}} & \cong & \left(T^{*}\CC\PP^{1}\right)^{n - 3}\\\\

\Aut(E)\setminus \cQ^{s}_{\textrm{\emph{odd}}} & \cong & T^{*}\CC\PP^{n - 3}.
\end{array}
\right.
\]
where the actions of $\Aut(E)$ on any given $\Phi$ are determined by the actions on each residue matrix $B_{i}$.
\end{lemma}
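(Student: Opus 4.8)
The plan is to upgrade the two quotient identifications from Lemma 2 — the statements $P(\Aut(E))\backslash\cL\cong(\CC\PP^1)^{n-3}$ and $\cong\CC\PP^{n-3}$ — to identifications of the corresponding cotangent bundles, by checking that the incidence spaces $\cQ^s_{\textrm{even}}$ and $\cQ^s_{\textrm{odd}}$ are exactly the total spaces of the conormal/cotangent bundles pulled back along $\cP^s=\cL\to\cN$, and that the $\Aut(E)$-action on the fibers is the natural linear one. Concretely, I would first record the general fact that for a stable parabolic bundle $E_*$ the space $H^0(\CC\PP^1,\operatorname{Par}\End(E_*)^\vee\otimes K_{\CC\PP^1})$ is $T^*_{\{E_*\}}\cN$ (this is stated in Section 4), and that via the residue embedding $\iota'_{E_*}$ of \eqref{eq:inclusion}/\eqref{eq:diff2} it is identified with the subspace of $\cH_{\textrm{even}}$ (resp.\ $\cH_{\textrm{odd}}$) consisting of those $\Phi$ whose residue $B_i$ lies in $\mathfrak n(F_i)$ for the flag $F_i$ determined by $\mathbf F\in\cP^s$. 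This is precisely the fiber of $\cQ^s$ over $\mathbf F$; so $\cQ^s\to\cP^s=\cL$ is a vector bundle whose fiber over $\mathbf F$ is canonically $T^*_{\{E_*(\mathbf F)\}}\cN$.

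Next I would make the $\Aut(E)$-equivariance precise. The group $\Aut(E)$ acts on $\cL$ (freely and properly, by Lemma 2), and on Higgs fields by conjugation of the residue matrices $B_i$ — exactly the action indicated in the statement. Since $\operatorname{Par}\End(E_*)$ is an $\Aut(E)$-equivariant sheaf, the identification of each fiber of $\cQ^s$ with a cotangent space of $\cN$ intertwines the $\Aut(E)$-action on $\cQ^s$ with the trivial action on $T^*\cN$ (the automorphisms act trivially on isomorphism classes). Hence $\cQ^s/\Aut(E)$ is a vector bundle over $\cN=\cL/\Aut(E)$ whose fiber over $\{E_*\}$ is $T^*_{\{E_*\}}\cN$, i.e.\ it is canonically $T^*\cN$. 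Because $\cL\to\cN$ is a principal bundle, the descent of a bundle is immediate once one checks the fiberwise identification is equivariant, so no delicate local computation is needed here.

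Finally I would make the last two biholomorphisms explicit. In the even case, the normalization $L_{n-2}=1,\,L_{n-1}=0,\,L_n=\infty$ of Lemma 2 presents $\cL/\Aut(E)$ as $\cF_1\times\dots\times\cF_{n-3}\cong(\CC\PP^1)^{n-3}$; I would check that the residue conditions $B_i\in\mathfrak n(F_i)$ for $i=1,\dots,n-3$ constrain the corresponding part of $\Phi$ to sweep out exactly the conormal line to $L_i$ in $\cF_i$, while the residues at $z_{n-2},z_{n-1},z_n$ are determined by the traceless/residue-sum constraint defining $\cH_{\textrm{even}}$, leaving $n-3$ independent cotangent directions — giving $(T^*\CC\PP^1)^{n-3}$. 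In the odd case, with normalization $u_{n-1}=u_n=0$, one identifies $\cN$ with $\CC\PP^{n-3}$ via Lemma 2; the fiber of $\cQ^s_{\textrm{odd}}$ should be read off from \eqref{eq:diff2} as the cotangent space of $\CC\PP^{n-3}$ using the Euler-sequence description of $T^*\CC\PP^{n-3}$ — the single relation imposed by the prescribed $\Res_{z=\infty}$ term together with the $\CC^*$-stabilizer is exactly the relation cutting $T^*\CC\PP^{n-3}$ out of $T^*(\CC^{n-2}\setminus\{0\})$.

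The main obstacle I anticipate is the bookkeeping in this last step: verifying that the dimension count and, more importantly, the \emph{module structure} of the fiber of $\cQ^s$ over each $\mathbf F$ matches the cotangent space coming from the explicit chart on $(\CC\PP^1)^{n-3}$ or $\CC\PP^{n-3}$ — in particular, in the odd case, that the coordinate $c_n$ and the point $u_n$ appearing in \eqref{eq:diff2} assemble (after quotienting by the $\CC^*$-stabilizer) into precisely the Euler-sequence relation for $T^*\CC\PP^{n-3}$ rather than some twisted variant. Everything else (the fiber is a vector space, it has the right dimension $n-3$ by the cotangent-space identification, the $\Aut(E)$-action is linear on fibers, descent along a principal bundle) is formal. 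I would therefore organize the proof around one clean equivariant fiberwise isomorphism and then invoke principal-bundle descent, relegating the even/odd chart matching to a short explicit verification.
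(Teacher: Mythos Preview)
Your plan is correct and matches the paper's approach: the paper in fact declares the lemma ``straightforward'' and omits a proof, relegating exactly the normalization/Euler-sequence bookkeeping you outline (the even case via fixing $L_{n-2},L_{n-1},L_n$ and the odd case via the dual Euler sequence $0\to T^*\CC\PP^{n-3}\to\mathcal{O}(-1)^{n-2}\to\mathcal{O}\to 0$) to the remark immediately following the statement. Your equivariance/descent framing is a cleaner way to package the same argument, and the obstacle you flag is indeed just the computation the paper sketches in that remark.
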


\begin{remark}
It is convenient to prescribe normalizations to determine orbit representatives in $\cQ_{\textrm{even}}$ (resp. $\cQ_{\textrm{odd}}$) under the actions of $\mathrm{PSL}(2,\CC)$ (resp. $\mathrm{N}(2)\times \mathrm{N}(2)$). By fixing the values of the lines 
 $L_{n - 2}, L_{n - 1}$ and $L_{n}$ that define the flags $F_{n - 1}$, $F_{n - 1}$ and $F_{n}$, we can use the residue relation \eqref{eq:diff1} to fully prescribe the values of $B_{n - 1}$, $B_{n -1}$ and $B_{n}$ in terms of $B_{1},\dots,B_{n -3}$, and thus determine a complex subspace in $\cQ_{\textrm{even}}$ which is biholomorhic to $T^{*}\cN\cong \left(T^{*}\CC\PP^{1}\right)^{n - 3}$. In the case $n = 4$ we simply recover a model for $T^{*}\CC\PP^{1}$ as the blow-up of the hypersurface $\cV\subset \mathfrak{sl}(2,\CC)$ at $0$ from the residue data at $z_{1}$. More generally, the splitting of $T^{*}\cN$ as a product of blown-up surfaces is obtained by restriction of data (residues and flags) at each $z_{1},\dots,z_{n-3}$. 
 
In the case of $\cQ_{\textrm{odd}}$, letting $u_{n-1} = u_{n} = 0$ in the residue relation \eqref{eq:diff2} yields the linear equations 
\begin{equation}\label{eq:fiber coord}
c_{n - 1} = -\sum_{i = 1}^{n - 2}c_{i},\qquad  c_{n} = \sum_{i = 1}^{n - 2}(z_{n-1} -z_{i})c_{i},\qquad \sum_{i = 1}^{n - 2}c_{i} u_{i} = 0,
\end{equation}
that determine an $(n-3)$-vector subbundle of the trivial $(n - 2)$-bundle over $\CC^{n - 2}\setminus\{(0,\dots,0)\}$ with fiber coordinates $c_{1},\dots, c_{n - 2}$.
The residual $\CC^{*}$-action $u_{i}\mapsto a u_{i}$ implies the reciprocal fiberwise action $c_{i}\mapsto a^{-1}c_{i}$. After taking quotients, the trivial bundle on $\CC^{n - 2}\setminus\{(0,\dots,0)\}$ descends to a vector bundle on $\CC\PP^{n - 3}$ isomorphic to $\mathcal{O}(-1)^{n - 2}$, since over the affine chart $u_{1} \neq 0$, the local holomorphic sections $c_{i} = 1/u_{1}$, $i = 1, \dots, n -2$, define a trivialization of it. Consequently, from the inclusions \eqref{eq:inclusion} we obtain a concrete model for $T^{*}\cN\cong  T^{*}\CC\PP^{n - 3}$ in terms of the dual Euler exact sequence of vector bundles over $\CC\PP^{n - 3}$
\[
0 \to T^{*}\CC\PP^{n - 3} \to \mathcal{O}(-1)^{n - 2}\to \mathcal{O} \to 0.
\]
From now on, we will assume that the previous coordinate normalizations in each degree have been performed, yielding the aforementioned models for $T^{*}\cN$. 
\end{remark}

Let us consider the map $\cV\times\cV \to \CC$ given as $(B,B')\mapsto \tr(BB')$.
A simple computation shows that when $B,B' \neq 0$, $\tr(B B') = 0$ if and only if the corresponding kernel lines coincide, i.e., if their induced flags in $\CC^{2}$ coincide.

\begin{corollary}
For any choice of parabolic weights as in theorems \ref{theo:1} and \ref{theo:2}, the nilpotent cone locus on $T^{*}\cN$ is determined over $\cQ_{\textrm{even}}$ and $\cQ_{\textrm{odd}}$ by the equations 
\begin{equation}\label{eq:nilpotent cone}
\sum _{\substack{j = 1\\ j \neq i}}^{n-1}\frac{\tr(B_{i}B_{j})}{z_{i} - z_{j}} = 0,\qquad i = 1,\dots, n - 3
\end{equation}
\end{corollary}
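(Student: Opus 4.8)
The plan is to unwind the definition of the nilpotent cone and translate the vanishing of $\tr(\Phi^2)$ into a system of equations in the residue data. First I would recall that $\tr(\Phi^2)$ is a meromorphic quadratic differential lying in the $(n-3)$-dimensional space $H^0(\CC\PP^1, K^2_{\CC\PP^1}(D))$, so a priori it is determined by $n-3$ complex numbers; the task is to identify these numbers explicitly. For the even-degree model, write $\Phi(z) = \left(\sum_{i=1}^{n-1} B_i/(z-z_i)\right)\d z$ as in \eqref{eq:diff1}. Then
\[
\tr(\Phi^2) = \left(\sum_{i,j} \frac{\tr(B_iB_j)}{(z-z_i)(z-z_j)}\right)\d z^2,
\]
and a partial-fraction decomposition together with $B_i\in\cV$ (so $\tr(B_i^2)=0$, killing the double poles) shows that $\tr(\Phi^2)$ vanishes identically if and only if all of its simple-pole residues at $z_1,\dots,z_{n-1}$ vanish, i.e.\ $\sum_{j\neq i}\tr(B_iB_j)/(z_i-z_j)=0$ for $i=1,\dots,n-1$, plus regularity at $\infty$. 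The point is that these $n-1$ residue equations are not independent: summing them over $i$ gives $0$ automatically by antisymmetry of each summand $\tr(B_iB_j)/(z_i-z_j)$, and one further relation comes from the behaviour at $z_n=\infty$ together with $B_n=-\sum_{i=1}^{n-1}B_i$. So exactly $n-3$ of these conditions are independent, matching $\dim H^0(\CC\PP^1,K^2(D))$, and I would select the first $n-3$ of them, obtaining precisely \eqref{eq:nilpotent cone}. Finally, under the normalization fixing $L_{n-2},L_{n-1},L_n$ described in the preceding remark, the residues $B_{n-2},B_{n-1},B_n$ are determined by $B_1,\dots,B_{n-3}$, so \eqref{eq:nilpotent cone} is a well-defined system on the model $\cQ_{\textrm{even}}\cong (T^*\CC\PP^1)^{n-3}$ cutting out exactly the classes $\{(E_*,\Phi)\}$ with $\tr(\Phi^2)=0$, which is $\cD$ by definition.

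For the odd-degree case the argument is parallel but I would need to account for the extra term $c_n\left(\begin{smallmatrix}0&0\\u_n^2&0\end{smallmatrix}\right)\d z$ in \eqref{eq:diff2} and the twist by $\Ad\,\mathrm{diag}(z^{k+1},z^k)$ governing the residue at $\infty$. Since $\left(\begin{smallmatrix}0&0\\u_n^2&0\end{smallmatrix}\right)$ is itself nilpotent and the cross terms with the $B_i/(z-z_i)$ contribute no new poles at finite $z_1,\dots,z_{n-1}$ beyond simple ones, the same partial-fraction reasoning applies: $\tr(\Phi^2)=0$ iff the simple-pole residues at $z_1,\dots,z_{n-1}$ vanish and the quadratic differential is regular (with the prescribed order) at $\infty$. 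The residues at the finite points are again $\sum_{j\neq i}\tr(B_iB_j)/(z_i-z_j)$ for $i=1,\dots,n-1$ — the extra term does not contribute to these because it has no pole at any finite $z_i$ — and the same two relations (summation over $i$, and compatibility at $\infty$) reduce these to the $n-3$ independent equations \eqref{eq:nilpotent cone}. Under the coordinate normalization $u_{n-1}=u_n=0$ from the remark, which expresses $c_{n-1},c_n$ and enforces $\sum c_iu_i=0$ in terms of $c_1,\dots,c_{n-2}$ via \eqref{eq:fiber coord}, these $n-3$ equations become explicit functions on $\cQ_{\textrm{odd}}\cong T^*\CC\PP^{n-3}$ and cut out $\cD$.

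The main obstacle I anticipate is the bookkeeping at $z_n=\infty$: one must verify carefully that after the change of coordinate $w=1/z$ (and, in the odd case, the nonconstant gauge transformation by $\mathrm{diag}(z^{k+1},z^k)$), $\tr(\Phi^2)$ has at most a simple pole at $w=0$, and that the residue there is the negative of the sum of the finite residues, so that counting residues gives $n-1$ conditions with a $2$-dimensional relation space and hence exactly $n-3$ independent ones. Concretely, the worry is whether the $\Ad$-twist could introduce a genuine obstruction (a double pole) in the odd case; but $\tr$ is conjugation-invariant, the residues $B_i$ lie in the unipotent radicals $\mathfrak n(F_i)$, and the defining relation for $\cH_{\textrm{odd}}$ forces the twisted residue at $\infty$ to again be nilpotent, so $\tr$ of the square has no double pole there either. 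Once this is checked, everything else is the routine partial-fraction computation and the already-established isomorphisms $T^*\cN\cong\Aut(E)\backslash\cQ^s_{\textrm{even/odd}}$ from the preceding lemma, so I would state the corollary's proof as: expand $\tr(\Phi^2)$, clear double poles using $B_i\in\cV$, equate the $n-1$ simple-pole residues to zero, note the two linear dependencies, and keep the first $n-3$.
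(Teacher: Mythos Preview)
Your even-degree argument is correct and matches the paper's in substance. The paper packages the final step more cleanly by exhibiting an explicit basis $q_i=P_i(z)\,dz^2$ of $H^0\!\left(\CC\PP^1,K^2_{\CC\PP^1}(D)\right)$, with $P_i(z)=\dfrac{1}{z-z_i}+\dfrac{z_i-1}{z}-\dfrac{z_i}{z-1}$ for $i=1,\dots,n-3$; since $\Res_{z_j}q_i=\delta_{ij}$ for $j\le n-3$, any element of that space vanishes iff its residues at $z_1,\dots,z_{n-3}$ do. This bypasses your relation-counting (the antisymmetry identity $\sum_i r_i=0$ and the identity coming from the $1/z^2$ coefficient at $\infty$), but both routes compute the same residues and arrive at the same $n-3$ equations.

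In the odd-degree case there is a small slip. You assert that the constant extra matrix $N=c_n\left(\begin{smallmatrix}0&0\\u_n^2&0\end{smallmatrix}\right)$ ``does not contribute to [the residues] because it has no pole at any finite $z_i$''. That is not the right reason: the \emph{cross term} $2\,\tr\!\left(N\cdot\dfrac{B_i}{z-z_i}\right)$ has a simple pole at $z_i$ with residue $2\,\tr(NB_i)=2c_n c_i u_n^{2}$, which is nonzero in general. The correct reason the equations \eqref{eq:nilpotent cone} hold unchanged is that the corollary is stated \emph{after} the coordinate normalization of the preceding remark, which sets $u_{n-1}=u_n=0$; then $N=0$ identically and the odd expression \eqref{eq:diff2} collapses to the even form \eqref{eq:diff1}, so the very same residue computation applies. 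Once you replace your justification with this observation, the rest of your odd-degree argument goes through.
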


\begin{proof}
The $(n-3)$-dimensional vector space $H^{0}\left(\CC\PP^{1},K^{2}_{\CC\PP^{1}}(D)\right)$ admits an explicit basis in terms of the meromorphic quadratic differentials $q_{i}$ that are determined over the affine chart $\CC_{0}\subset\CC\PP^{1}$ by means of the rational functions 
\[
P_{i}(z) = \frac{1}{z- z_{i}} + \frac{z_{i} - 1}{z} - \frac{z_{i}}{z - 1},\qquad n = 1,\dots,n-3
\]
Hence, a quadratic differential $q\in H^{0}\left(\CC\PP^{1},K^{2}_{\CC\PP^{1}}(D)\right)$ is zero if and only if and only if its residues at $z_{1},\dots,z_{n-3}$ vanish. The proof follows after a direct computation of such residues for the induced quadratic differential of the normalized matrix-valued meromorphic differentials \eqref{eq:diff1} and \eqref{eq:diff2}.
\end{proof}

\begin{remark}
The introduction of affine coordinates \eqref{eq:V-coord} on the Zariski open subset $\cV'\subset\cV$ and the consequential coordinate description of the pull-back of $T^{*}\cN$ to $\CC^{n - 2}\setminus\{(0,\dots,0)\}$ in the odd degree case by means of equations \eqref{eq:fiber coord} provides an explicit set of equations for the nilpotent cone locus $\cD\subset T^{*}\cN$. It is straightforward to verify that in such case, equations \eqref{eq:nilpotent cone} take the form
\[
c_{i}\left(\sum_{\substack{j = 1\\ j \neq i}}^{n-2}c_{j}\frac{(u_{i} - u_{j})^{2}}{z_{i} - z_{j}} - \sum_{j = 1}^{n - 2}c_{j}\frac{u_{i}^{2}}{z_{i}}\right) = 0,\quad i = 1,\dots, n - 3.
\]
\end{remark}
\begin{remark}
It is straightforward to verify that in the case when $n = 4$ and $\cN \cong \CC\PP^{1}$ independently of the degree parity, the nilpotent cone locus gets identified with the union of $\cN$
and the cotangent spaces over the divisor $D = z_{1} + z_{2} + z_{3} + z_{4}$, as described by Hausel \cite{Hausel98}. In particular, its trace on $\cN$ corresponds to the divisor $D$, providing a sort of ``Torelli's theorem" for moduli spaces of stable parabolic bundles on $\CC\PP^{1}$.
\end{remark}

\begin{remark}\label{remark:higher stratification}
The nilpotent cone locus appears as a special component of a more general stratification of $T^{*}\cN$. Such stratification is of paramount importance and arises in the study of Hitchin's integrable systems.
Namely, the vector spaces $H^{0}\left(\CC\PP^{1},K^{2}_{\CC\PP^{1}}(D)\right)$ can be inductively stratified in terms of the subspaces 
\[
Z_{I} = \left\{q = \sum_{i = 1}^{n - 3}c_{i}q_{i}\in H^{0}\left(\CC\PP^{1},K^{2}_{\CC\PP^{1}}(D)\right) \; \Big| \; c_{i} = 0\quad\text{if}\quad i\in I \right\}
\]
for any $I\subset \{1,\dots n\}$, in the sense that every such $Z_{I}$ is isomorphic to the the vector space 
\[
V_{I^{c}} := H^{0}\left(\CC\PP^{1},K^{2}_{\CC\PP^{1}}\left(D_{I^{c}}\right)\right),\qquad D_{I^{c}} = \sum_{j\in I^{c}}z_{j}
\]
In particular, for each $i = 1,\dots, n-3$, let $Z_{i}$ be the hypersurface corresponding to set $I = \{i\}$. The Zariski open set
\[
\cU = V \setminus \bigcup_{i = 1}^{n - 3} Z_{i}
\]
admits a finer stratification when $n \geq 6$. Namely, any quadratic differential $q\in\cU$ possesses exactly $n - 4$ zeros up to multiplicity. The stratification of $\cU$ then corresponds to  counting the multiplicity of such zeros, which also descends to the Zariski open set $\PP(\cU)\subset \PP(V)$. In particular, the restriction to meromorphic quadratic differentials with only simple zeros lead to a Zariski open subset $\PP(\cU')\subset \PP(\cU)$ which is isomorphic to the $(n - 4)$-dimensional configuration space of $(n - 4)$ points in $\CC\PP^{1}\setminus\{z_{1},\dots,z_{n}\}$. In conclusion: 
\emph{the projective space $\PP(V)$ may be thought of as a compactification of the configuration space $\PP(\cU')$, with an intermediate ambient space given by $\PP(\cU)$.}

The stratification of $\cM$, and in particular of the cotangent bundle $T^{*}\cN$, is then induced from the stratification of $\PP(V)$ under Hitchin's integrable system. In particular, the inverse image of the quasiprojective variety $\PP(\cU')$ is the so-called ``regular locus" $\cM_{0}\subset\cM$.
\end{remark}

\noindent \textbf{Acknowledgments.}
I would like to thank Leon A. Takhtajan for his encouragement, suggestions, and stimulating discussions during the creation of this article, and Leticia Brambila-Paz, for her constructive criticism on the preliminary draft. This work was developed under the partial support of the FORDECyT-CONACyT grant 265667. 

\bibliographystyle{amsalpha}
\bibliography{Minimal}

\end{document}